\documentclass[11pt]{article}
\usepackage[total={6.5in, 9in}]{geometry}
\usepackage{amsmath,amsfonts,mathrsfs,amsthm,amssymb}
\numberwithin{equation}{section}
\usepackage{cases}
\usepackage{latexsym,bm}
\usepackage{indentfirst}
\usepackage{color}
\usepackage{ifpdf}
\usepackage{graphicx}
\usepackage{psfrag}
\usepackage{dsfont}
\usepackage{enumerate}

\usepackage{pgf,tikz,pgfplots}
\usepackage{tikz-3dplot}
\usetikzlibrary{positioning,fit,backgrounds,calc,decorations.pathreplacing}
\usepackage{pifont}
\usepackage[T1]{fontenc}
\usepackage{lmodern}
\usetikzlibrary{arrows.meta,backgrounds,calc}

\definecolor{cOne}{HTML}{2563A8}
\definecolor{cTwo}{HTML}{B04A3A}
\definecolor{cThree}{HTML}{3A7D5D}
\definecolor{cQ}{HTML}{247C84}
\definecolor{ink}{HTML}{20242A}
\definecolor{soft}{HTML}{F6F7F9}
\definecolor{joinA}{HTML}{D6E5F4}
\definecolor{joinT}{HTML}{E3E8D2}

\usepackage{refcount}
\usepackage[linesnumbered,ruled,vlined]{algorithm2e}
\usepackage{subcaption}

\usepackage[
pdfauthor={},
pdftitle={},
pdfstartview=XYZ,
bookmarks=true,
colorlinks=true,
linkcolor=blue,
urlcolor=blue,
citecolor=blue,
linktocpage=true,
hyperindex=true
]{hyperref}

\theoremstyle{definition}
\newtheorem{THM}{\textbf{Theorem}}[section]

\newtheorem{LEM}[THM]{\textbf{Lemma}}
\newtheorem{CLA}{\textbf{Claim}}[section]

\newcommand{\comp}{\operatorname{c}}

\providecommand{\phantomsection}{} \makeatletter \newcommand{\CaseLabel}[1]{%
	\smallskip \noindent\textbf{Case #1.}\phantomsection \def\@currentlabel{#1}\label{case:#1}\ } 
\makeatother

\newcommand{\CC}{\mathcal{C}}

\begin{document}
	\title{Triangle-Free Graphs of Toughness Approaching Two\\Without a 2-Factor}
	\author{
		Songling Shan \thanks{Auburn University, Department of Mathematics and Statistics, Auburn, AL 36849. 
			Email:  	{\tt szs0398@auburn.edu}. Partially supported by NSF  grant 
			DMS-2451895.}
	}
	
	\date{\today}
	\maketitle

\begin{abstract}
	
	By work of Enomoto, Jackson, Katerinis, and Saito from 1985, every
	$2$-tough graph has a $2$-factor, and this toughness bound is best
	possible: for every $\varepsilon>0$, there exist $(2-\varepsilon)$-tough
	graphs with no $2$-factor. It is natural to ask whether the latter
	statement remains true for triangle-free graphs. Bauer, van den Heuvel,
	and Schmeichel conjectured this in 1996. In the same paper, they proposed
	an infinite family of triangle-free graphs with no $2$-factor whose
	toughness they believed approaches $2$, but the required toughness bound
	was not established. In this paper, we confirm their conjecture. For
	every even integer $q\ge 6$, we construct a triangle-free graph $G_q$
	with no $2$-factor and with toughness 
	\[
	\tau(G_q)
	=\frac{2q^2-q-2}{q^2+q}
	=2-\frac{3q+2}{q^2+q}.
	\]
	In particular, $\tau(G_q)\to 2$ as $q\to\infty$, showing that the
	threshold $2$ for the existence of a $2$-factor remains best possible
	even within the class of triangle-free graphs.
\end{abstract}

	\emph{\textbf{Keywords}.} Toughness; 2-factor; Tutte's 2-factor Theorem; Triangle-free graph

\section{Introduction}

We consider only simple  graphs. 
Let $G$ be a graph.
Denote by $V(G)$ and  $E(G)$ the vertex set and edge set of $G$,
respectively.  
Let $v\in V(G)$ and $S\subseteq V(G)$. 
Then  $N_G(v)$   denotes the set of neighbors
of $v$ in $G$ and  $N_G(S):=(\bigcup_{v\in S}N_G(v))\setminus S$. 
The subgraph of $G$ induced on $S$ and $V(G)\setminus S$ are denoted by
$G[S]$ and $G-S$, respectively. For notational simplicity we write $G-v$ for $G-\{v\}$.
Let $V_1,
V_2\subseteq V(G)$ be two disjoint vertex sets. Then $E_G(V_1,V_2)$ is the set
of edges in $G$  with one end in $V_1$ and the other end in $V_2$ and  $e_G(V_1,V_2):=|E_G(V_1,V_2)|$.  We write $E_G(v,V_2)$ and $e_G(v,V_2)$
if $V_1=\{v\}$ is a singleton.  
When $H\subseteq G$ and $S\subseteq V(G)\setminus V(H)$, we write $N_G(H)$, $E_G(H,S)$,
and $e_G(H,S)$ respectively for $N_G(V(H))$, $E_G(V(H),S)$,
and $e_G(V(H),S)$.  For two integers $p$ and $q$, we let $[p,q]=\{i\in \mathbb{Z}:  p\le i\le q\}$.

The number of components of $G$ is denoted by $c(G)$. Let $t\ge 0$ be a
real number. The graph $G$ is said to be \emph{$t$-tough} if $|S|\ge t\cdot
c(G-S)$ for each $S\subseteq V(G)$ with $c(G-S)\ge 2$. The \emph{toughness $\tau(G)$} is the largest real number $t$ for which $G$ is
$t$-tough, or is  $\infty$ if $G$ is complete. This concept was introduced by Chv\'atal~\cite{Chvatal1973} in 1973.
  A spanning
$2$-regular subgraph is called a \emph{$2$-factor}.  Enomoto, Jackson,
Katerinis, and Saito proved that every $2$-tough graph has a
$2$-factor~\cite{EnomotoEtAl1985}, and this toughness bound is best
possible: for every $\varepsilon>0$, there exist $(2-\varepsilon)$-tough
graphs with no $2$-factor.  It is natural to ask whether the latter
statement remains true for triangle-free graphs. 
Questions connecting toughness, triangle-freeness, and $2$-factors were
studied in particular by Bauer, van den Heuvel, and
Schmeichel~\cite{BauerVandenHeuvelSchmeichel1995,BauerVandenHeuvelSchmeichel1996}.
In particular, 
Bauer, van den Heuvel,
and Schmeichel conjectured   in 1996 that for every $\varepsilon>0$, there exist $(2-\varepsilon)$-tough
triangle-free graphs with no $2$-factor~\cite{BauerVandenHeuvelSchmeichel1996}. In the same paper, they proposed
an infinite family of triangle-free graphs with no $2$-factor whose
toughness they believed approaches $2$, but the required toughness bound
was not established. In this paper, we confirm their conjecture.

\begin{THM}\label{thm:main}
	For every even integer $q\ge 6$, there exists a finite simple triangle-free
	graph $G_q$ with no $2$-factor such that
	\[
	\tau(G_q)=\frac{2q^2-q-2}{q^2+q}
	=2-\frac{3q+2}{q^2+q}.
	\]
\end{THM}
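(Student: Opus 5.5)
We will construct $G_q$ explicitly, following the construction proposed by Bauer, van den Heuvel, and Schmeichel. Then we verify three things: $G_q$ is triangle-free, $G_q$ has no $2$-factor, and $\tau(G_q)$ equals the stated value. For the absence of a $2$-factor we use Tutte's $2$-factor theorem. A graph $G$ has a $2$-factor if and only if for all disjoint $S,T\subseteq V(G)$
\[
\delta(S,T)=2|S|-2|T|+\sum_{y\in T}d_{G-S}(y)-h(S,T)\ge 0,
\]
where $h(S,T)$ is the number of components $D$ of $G-(S\cup T)$ with $e_G(D,T)$ odd. The construction is built around a core set $S$ of size about $q^2$ and a set $T$ of degree-$q$-type vertices. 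Odd-edge gadgets, which are triangle-free bipartite blocks joining $T$ to $S$, are attached so that $\delta(S,T)=-2<0$. The parity $q$ even will be used to make each gadget's edge count to $T$ odd while keeping vertex degrees balanced.

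Concretely, we take $T$ to be an independent set. Each $y\in T$ has exactly a prescribed number of neighbours outside $S$, each lying in a distinct gadget $D$. Each gadget is a bipartite graph, for example a subdivided $K_{q,q}$-like block, so triangle-freeness is checked by exhibiting a bipartition of every block together with the observation that $S$ and $T$ are independent and no two neighbours of a vertex are adjacent. With this setup the count $\delta(S,T)$ is a direct computation.

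For the toughness, the upper bound comes from exhibiting one cut set $X$. The natural choice is $S$ together with the gadget attachment vertices, with $|X|=2q^2-q-2$ and $c(G_q-X)=q^2+q$, where the components are the vertices of $T$ and the gadget interiors. The lower bound is the main work. Fix any $X$ with $c(G_q-X)\ge 2$; we must show $|X|\ge \frac{2q^2-q-2}{q^2+q}\,c(G_q-X)$. The plan is a charging argument. Classify the components of $G_q-X$ by type: those meeting $T$, those inside a single gadget, and those containing core vertices. Assign each component weight $\tau$, and redistribute the vertices of $X$ to components using local lemmas of the following kind: inside a gadget, separating $k$ pieces costs at least $2k-\text{(small boundary term)}$ vertices, by the bipartite/subdivided structure and high connectivity within the gadget. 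Then sum over gadgets and over the core. It is cleanest first to reduce to the case $X\cap T=\emptyset$ or to other normalized $X$, by exchange arguments showing that replacing a vertex of $X$ by its attachment does not decrease the number of components per vertex removed.

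The main obstacle is this global lower bound. Individual gadgets are each close to $2$-tough, and the deficit $\frac{3q+2}{q^2+q}$ arises only from the global interaction between $S$ and $T$. The charging must therefore be tight in exactly the extremal configuration. My first attempt would be induction on the number of gadgets that $X$ "cuts": show that each cut gadget contributes a ratio of at least $2$ locally, so that any $X$ not containing all of the core behaves $2$-toughly. I would then treat separately the case where $X$ contains most of the core, where an explicit count, optimized over how many core vertices are omitted, yields the exact minimum ratio and hence equality.
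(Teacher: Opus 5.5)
There is a genuine gap. The proposal never actually specifies $G_q$: the sizes of $S$ and $T$, the gadgets, and how they attach are left as ``about $q^2$'', ``degree-$q$-type'' and ``$K_{q,q}$-like''. So none of the three required properties can be checked, and the values $|X|=2q^2-q-2$, $c(G_q-X)=q^2+q$ are read off the target fraction rather than derived from a graph.

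Worse, the gadget type you do suggest cannot work. In any noncomplete graph, deleting the neighbourhood of a minimum-degree vertex shows that $\tau(G)$ is at most half the minimum degree. The target value exceeds $3/2$ for $q\ge 6$ (it is $32/21$ at $q=6$), so $G_q$ must have minimum degree at least $4$. This rules out subdivided blocks with degree-$2$ vertices. More generally, suppose a gadget is bipartite with sides $P,Q$, $|P|\le|Q|$. Deleting $N_{G_q}(Q)$ isolates every vertex of $Q$, so $\tau(G_q)\le (|P|+|N_{\mathrm{ext}}(Q)|)/|Q|\le 1+|N_{\mathrm{ext}}(Q)|/|Q|$, where $N_{\mathrm{ext}}(Q)$ is the set of neighbours of $Q$ outside the gadget. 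Hence a bipartite gadget attached through a few ``attachment vertices'' forces toughness near $1$. For the same reason, your proposed local lemma (separating $k$ pieces inside a gadget costs about $2k$) is false for bipartite gadgets. Likewise, ``gadgets close to $2$-tough'' is incompatible with ``gadgets bipartite'', since a noncomplete bipartite graph has toughness at most $1$.

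The missing idea is the opposite of near-$2$-tough gadgets: the paper uses blocks that are \emph{enormously} tough, so that cutting inside a block is never profitable. It takes $q$ copies $H_i$ of a $d$-regular triangle-free graph with $\tau(H)\ge M$ (Alon's code graphs), where $M/2\ge\tau N$. A tiny set $S$ with $|S|=q/2-1$ is joined to $T=\{t_{ij}\}$ and to the independent set $A_i=N_{H_i}(v)$. Each $t_{ij}$ gets exactly one port in $H_i$ and one in $H_j$, both outside $A_i$. Then $\delta(S,T)=2s-q=-2$, and triangle-freeness is immediate.

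In the lower bound, any $X$ that disconnects or empties a block, or contains $A_i$, pays at least $\frac M2(h_i+1)$, which swamps every possible gain. Otherwise $S\subseteq X$, and the problem collapses to deleting subdivision vertices and branch--subdivision edges of the subdivided $K_q$. There one gets $x\ge y+\max\{C_a,y\}$ with $C_a=(a-1)(2q-a)/2$. The exact bound then follows from $(t+q)(s+2C_a)-(s+2t)(a+C_a)=\frac14(q-a)\bigl((3q+2)a+q^2-7q-6\bigr)\ge 0$. Your exchange/induction/charging outline has no counterpart to this reduction, and nothing in it would produce the specific constant $\frac{3q+2}{q^2+q}$.
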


The remainder of this paper is organized as follows: in Section 2, we introduce some notation and preliminary 
results, and in Section 3, we prove Theorem~\ref{thm:main}. 

\section{Preliminary results}

Let $S$ and $T$ be disjoint sets of vertices of a graph $G$ 
and  $D$ be a component of $G-(S\cup T)$.
Then $D$ is said to be an \emph{odd component}
(resp.~\emph{even component})
if $e_G(D, T)\equiv 1\pmod{2}$
(resp.~$e_G(D, T)\equiv 0\pmod{2}$).
For each integer $k\ge 0$, we denote by $\CC_{2k+1}$ the set of all odd components $D$ of 
$G-(S\cup T)$ such that $e_G(D,T)=2k+1$. Let $\CC=\bigcup_{k\ge 0} \CC_{2k+1}$ and 
let $c(S, T)=|\CC|$.

Define 
$\delta(S, T)=2|S|+\sum_{y\in T} d_{G-S}(y)-2|T|-c(S, T)$.
It is easy to see $\delta(S, T)\equiv 0\pmod{2}$
for every $S$,~$T\subseteq V(G)$
with $S\cap T=\emptyset$.
We use the following criterion for the existence of a $2$-factor,
which is a special case of Tutte's $f$-Factor Theorem.
\begin{THM}[Tutte~\cite{Tutte1952}]\label{tutte's theorem}
	A graph $G$ has a $2$-factor if and only if
	$\delta(S, T)\ge 0$
	for every $S$,~$T\subseteq V(G)$
	with $S\cap T=\emptyset$.
\end{THM}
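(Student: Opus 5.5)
The plan is to obtain this directly as the case $f\equiv 2$ of Tutte's $f$-Factor Theorem. That theorem says $G$ has an $f$-factor if and only if, for all disjoint $S,T\subseteq V(G)$,
\[
\sum_{x\in S} f(x)+\sum_{y\in T}\bigl(d_{G-S}(y)-f(y)\bigr)-q_f(S,T)\ge 0 .
\]
Here $q_f(S,T)$ counts the components $D$ of $G-(S\cup T)$ with $\sum_{v\in D}f(v)+e_G(D,T)$ odd. Setting $f\equiv 2$, the term $\sum_{v\in D}f(v)=2|D|$ is even. So $D$ is counted exactly when $e_G(D,T)$ is odd, which gives $q_f(S,T)=c(S,T)$. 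The left-hand side then becomes $2|S|+\sum_{y\in T}d_{G-S}(y)-2|T|-c(S,T)=\delta(S,T)$, and the statement follows verbatim.

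For a self-contained check, I would verify necessity by direct counting. Let $F$ be a $2$-factor and fix disjoint $S,T$. For each component $D$ of $G-(S\cup T)$, degree counting in $F$ gives
\[
2|D| = 2e_F(D)+e_F(D,S)+e_F(D,T),
\]
so $e_F(D,S\cup T)$ is even. Take $D\in\CC$. If $e_F(D,S)=0$, then $e_F(D,T)$ is even while $e_G(D,T)$ is odd, so some edge of $G$ between $D$ and $T$ is not in $F$. Hence every odd component either sends an $F$-edge to $S$ or has a non-$F$ edge to $T$.

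These two kinds of witness are then charged separately.
\begin{itemize}
\item The number of $F$-edges from $S$ into the components is at most $2|S|-e_F(S,T)$.
\item Write $\sum_{y\in T}d_{G-S}(y)-2|T|$ as the number of non-$F$ edges from $T$ into $G-S$ (edges inside $T$ counted twice) minus $e_F(T,S)$. This term is therefore at least the number of non-$F$ edges from $T$ to the components, minus $e_F(T,S)$.
\end{itemize}
Adding the two bounds, the $e_F(S,T)$ terms cancel, and we get $2|S|+\sum_{y\in T}d_{G-S}(y)-2|T|\ge c(S,T)$, that is, $\delta(S,T)\ge 0$.

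If one does not want to quote the general $f$-factor theorem for sufficiency, I would use Tutte's gadget reduction to perfect matchings.
\begin{itemize}
\item For each vertex $v$, create $d(v)$ outer copies, one per incident edge, and $d(v)-2$ inner vertices joined to all the outer copies. Each edge $uv$ becomes an edge between the corresponding outer copies. Perfect matchings of this auxiliary graph $G'$ correspond exactly to $2$-factors of $G$.
\item If $G$ has no $2$-factor, Tutte's $1$-factor theorem gives a barrier $X$ in $G'$.
\item Normalize $X$ gadget by gadget: for each $v$, either all inner vertices of $v$ lie in $X$, or all outer copies of $v$ lie in $X$, or neither. Then read off $S$ as the set of vertices whose outer copies are all in $X$, and $T$ as the set whose inner vertices are all in $X$.
\item Check that the barrier inequality translates into $\delta(S,T)<0$.
\end{itemize}
The main obstacle is this normalization. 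One must show that each local modification of $X$ does not decrease $\mathrm{odd}(G'-X)-|X|$, which requires a careful case analysis of how components merge or split inside a gadget. For this reason I would rely primarily on the specialization of the $f$-factor theorem.
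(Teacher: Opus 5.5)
Your proposal is correct and follows the paper, which states this result without proof as the $f\equiv 2$ case of Tutte's $f$-Factor Theorem. Your specialization is exactly right: $f(D)=2|D|$ is even, so $q_f(S,T)=c(S,T)$ and the $f$-deficiency reduces to $\delta(S,T)$. Your added direct counting proof of necessity is also sound. In the optional gadget route, you would first need to handle vertices with $d(v)\le 1$, where the gadget is undefined; this is easy, since $(S,T)=(\emptyset,\{v\})$ already gives $\delta<0$.
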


An ordered pair $(S,T)$ consisting of disjoint sets of vertices $S$ and $T$ in a graph $G$ 
is called a barrier if $\delta(S, T)\le -2$.
By Theorem~\ref{tutte's theorem},
if $G$ does not have a $2$-factor,
then $G$ has a barrier.

A conjecture of Chv\'atal~\cite{Chvatal1973} asserts that there is an
absolute constant $t_0$ such that every $t_0$-tough graph on at least
three vertices is \emph{pancyclic}, that is, contains a cycle of every
length from $3$ to its order. Bauer, van den Heuvel, and
Schmeichel~\cite{BauerVandenHeuvelSchmeichel1995} disproved this
conjecture by constructing, for every real number $t_0$, a
$t_0$-tough triangle-free graph. In the same year,
Alon~\cite{Alon1995} gave a different explicit construction based on
code graphs and established a stronger quantitative result, which we
state below.
 
\begin{THM}[Alon~{\cite[Theorem~3.1]{Alon1995}}]\label{lem:alon-code}
	For every integer $k>1$, there is a graph $G_k$ on $	n_k=2^{2k}$. 
	vertices that is $(2^k-1)$-regular and triangle-free with 
	\[
	\tau(G_k)
	>
	\frac{1}{3}
	\left(
	\frac{(2^k-1)^2}
	{(2\cdot 2^{k/2}+1)(2^k-1)
		+(2\cdot 2^{k/2}+1)^2}
	-1
	\right),
	\]
	and in particular  $\tau(G_k)=\Omega(2^{k/2})
	=\Omega(n_k^{1/4})$. 
\end{THM}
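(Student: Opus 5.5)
The plan is to realize $G_k$ as a Cayley graph on the elementary abelian group $\mathbb{F}_2^{2k}$. Triangle-freeness will come from an additive condition on the generating set, and toughness from a spectral bound. Identify $\mathbb{F}_2^{2k}$ with $\mathbb{F}_{2^k}\times\mathbb{F}_{2^k}$ and take the generating set
\[
A=\{(x,x^3): x\in\mathbb{F}_{2^k}^{*}\},
\]
which has $2^k-1$ elements. Join $u$ and $v$ whenever $u+v\in A$. Then $G_k$ is a simple $(2^k-1)$-regular graph on $2^{2k}$ vertices, since $A=-A$ and $0\notin A$.

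\emph{Triangle-freeness.} A triangle $u,u+a,u+a+b$ with $a,b,a+b\in A$ would give three elements of $A$ summing to zero. So we need $x+y+z=0$ and $x^3+y^3+z^3=0$ to be impossible for $x,y,z\in\mathbb{F}_{2^k}^*$, where distinctness follows automatically since distinct generators are needed. Substituting $z=x+y$ gives $x^3+y^3+(x+y)^3=xy(x+y)$, which vanishes only if $x=0$, $y=0$ or $x=y$. Each of these contradicts $x,y,z\neq 0$.

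\emph{Eigenvalues.} For a Cayley graph on $\mathbb{F}_2^{2k}$, the eigenvalues are the character sums
\[
\lambda_{(a,b)}=\sum_{x\neq 0}(-1)^{\mathrm{Tr}(ax+bx^3)}.
\]
The trivial character gives $d=2^k-1$. For $(a,b)\neq(0,0)$, the full sum over all $x\in\mathbb{F}_{2^k}$ is a Weil sum of a polynomial of odd degree at most $3$. By the Weil (Carlitz--Uchiyama) bound it is at most $(3-1)2^{k/2}$ in absolute value. Removing the $x=0$ term therefore gives $|\lambda|\le \lambda^*:=2\cdot 2^{k/2}+1$ for every nontrivial eigenvalue.

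\emph{Spectral toughness bound.} It remains to prove the general statement: an $(n,d,\lambda)$-graph satisfies
\[
\tau>\tfrac13\bigl(d^2/(d\lambda+\lambda^2)-1\bigr),
\]
and then substitute $d=2^k-1$ and $\lambda=\lambda^*$. Let $S$ be a cut with components $C_1,\dots,C_m$, $m\ge 2$, ordered by size. I would combine two spectral facts.

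First, the expander mixing lemma in the form
\[
e(X,Y)=0 \ \Rightarrow\ |X||Y|\le (\lambda/d)^2(n-|X|)(n-|Y|).
\]
Split the components into two groups $X$ and $Y$ with no edges between them and with both groups of size comparable to $|V\setminus S|/3$ (or a single giant component against the rest). This bounds the product of the sizes of the groups in terms of $|S|$.

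Second, apply the mixing lemma to a single small component $C$. Every vertex of $C$ has all $d$ of its neighbours inside $C\cup S$, so $e(C,\overline{C})=d|C|-2e(C)$ is large. Comparing this with the mixing bound on $e(C)$ shows $|C|\ge d-\lambda$ roughly. More importantly, each such component forces about $d^2/(d\lambda+\lambda^2)\cdot|C|^{\ldots}$ edges into $S$, so $|S|\gtrsim \frac{d^2}{d\lambda+\lambda^2}\,|C|$ and hence $|S|$ is large relative to $m$.

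\emph{Main obstacle.} I expect the hard part to be balancing these two estimates so that they yield exactly $|S|/m>\frac13\bigl(\frac{d^2}{d\lambda+\lambda^2}-1\bigr)$. The factor $\frac13$ comes from the partition step, where one can only guarantee that both sides carry at least a third of the non-$S$ vertices. I would handle this by case analysis on whether some component contains more than a third of $V\setminus S$.
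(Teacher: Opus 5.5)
\textbf{Gap: the spectral toughness inequality is not proved.} The paper does not prove this statement; it quotes it from Alon, so your proposal has to be judged on its own merits. Your Cayley-graph construction is correct and matches the parameters in the statement. So are the triangle-freeness computation $x^3+y^3+(x+y)^3=xy(x+y)$ and the bound $|\lambda|\le 2\cdot 2^{k/2}+1$ via Carlitz--Uchiyama.

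The gap is the inequality $\tau>\frac{1}{3}\bigl(\frac{d^2}{d\lambda+\lambda^2}-1\bigr)$ itself, which is the real content and which you leave as the ``main obstacle''. The ingredients you sketch for it do not work as stated:
\begin{enumerate}[(i)]
\item The claim $|C|\ge d-\lambda$ for a component is false. Taking $S=N_G(v)$ makes $\{v\}$ a component.
\item A per-component lower bound on $e(C,S)$ cannot control the number $m$ of components. All components share the same $S$, so such bounds cannot be summed. From a single component you only get $|S|\ge e(C,S)/d$, which carries no factor $d/\lambda$. The exponent $|C|^{\ldots}$ is also left unspecified.
\item Your vertex-balanced split is never related to $m$.
\end{enumerate}

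\textbf{Missing idea: balance by number of components, not by vertices.} Then no second estimate and no case analysis are needed. Split the $m\ge 2$ components into two groups with unions $X$ and $Y$, each containing at least $\lfloor m/2\rfloor\ge m/3$ components, so $|X|,|Y|\ge m/3$. Write $s=|S|$. Since $e(X,Y)=0$, $n-|X|=s+|Y|$ and $n-|Y|=s+|X|$, your mixing inequality becomes $d^2|X||Y|\le\lambda^2(s+|X|)(s+|Y|)$. Equivalently, $(1+s/|X|)(1+s/|Y|)\ge(d/\lambda)^2$.

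If $|X|\le|Y|$, this forces $1+s/|X|\ge d/\lambda$. Hence
$s\ge \frac{m}{3}\bigl(\frac{d}{\lambda}-1\bigr)>\frac{m}{3}\bigl(\frac{d^2}{d\lambda+\lambda^2}-1\bigr)$,
because $\frac{d}{\lambda}>\frac{d^2}{\lambda(d+\lambda)}$. If $d\le\lambda$, the target bound is negative and there is nothing to prove. This gives Alon's inequality, in fact a slightly stronger one. The $\Omega(2^{k/2})$ consequence then follows by substituting $d=2^k-1$ and $\lambda=2\cdot 2^{k/2}+1$.

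Your own vertex-balanced split can also be completed, but only with two observations you did not make. When both sides have at least $(n-s)/3$ vertices, use $m\le n-s$. When one component is giant, the other side contains $m-1$ components and so has at least $m-1$ vertices. In either route, the per-component estimate should be dropped.
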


\begin{LEM}[Alon blocks]\label{lem:blocks}
	For every $M>0$ and every positive integer $r$, there is an
	even-order $d$-regular triangle-free graph $H$ such that
	\[
	\tau(H)\ge M
	\qquad\text{and}\qquad
	|V(H)|-d\ge r.
	\]
\end{LEM}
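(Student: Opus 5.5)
We derive this lemma from Theorem~\ref{lem:alon-code}; the plan is to take a single Alon graph $G_k$ with $k$ large. Given $M>0$ and a positive integer $r$, choose $k>1$ large enough that two things hold. First, the lower bound in Theorem~\ref{lem:alon-code} should be at least $M$. Second, we need $2^{2k}-(2^k-1)\ge r$.

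The first condition can be met because the displayed bound is $\Omega(2^{k/2})$ and so tends to infinity. To see this concretely, note that the numerator $(2^k-1)^2$ is of order $2^{2k}$. The denominator is $(2\cdot 2^{k/2}+1)(2^k-1)+(2\cdot 2^{k/2}+1)^2$, which is of order $2^{3k/2}$. Hence the ratio grows like $2^{k/2}/2$, and after subtracting $1$ and dividing by $3$ the bound still exceeds any fixed $M$ once $k$ is large.

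The second condition can be met because $2^{2k}-2^k+1\ge 2^k$ grows without bound.

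Set $H=G_k$ and $d=2^k-1$. Then we check the required properties in turn.
\begin{itemize}
\item $H$ is $d$-regular and triangle-free by Theorem~\ref{lem:alon-code}.
\item Its order $n_k=2^{2k}$ is even.
\item $\tau(H)>M$ by the first choice of $k$.
\item $|V(H)|-d=2^{2k}-2^k+1\ge r$ by the second choice of $k$.
\end{itemize}

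There is no real obstacle here. The only step needing care is the elementary estimate showing that Alon's explicit lower bound is unbounded in $k$. For that, it suffices to observe that for $k\ge 2$ we have $2\cdot 2^{k/2}+1\le 3\cdot 2^{k/2}$. The denominator is then at most $3\cdot 2^{3k/2}+9\cdot 2^{k}\le 12\cdot 2^{3k/2}$. The numerator satisfies $(2^k-1)^2\ge 2^{2k}/4$, so the ratio is at least $2^{k/2}/48$, which tends to infinity.
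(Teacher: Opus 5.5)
Your proof is correct and follows the paper's argument: take Alon's graph $G_k$ for large $k$, with $d=2^k-1$, even order $2^{2k}$, and $2^{2k}-2^k+1\to\infty$. The only difference is that you verify explicitly that Alon's lower bound is unbounded, by bounding the ratio below by $2^{k/2}/48$; the paper simply cites this as $\tau(G_k)=\Omega(2^{k/2})$.
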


\begin{proof}
	Let $G_k$  be the code graph from Theorem~\ref{lem:alon-code}. Then $G_k$
	is $(2^k-1)$-regular with $|V(G_k)|=2^{2k}$, 
	and $G_k$ is triangle-free. Moreover, $\tau(G_k)=\Omega(2^{k/2})$, 
	so $\tau(G_k)\to\infty$ as $k\to\infty$. Also,
	$|V(G_k)|- (2^k-1) 
	=2^{2k}-2^k+1\to\infty$. 
	Hence, for all sufficiently large $k$, we have 
	$\tau(G_k)\ge M$ and $	|V(G_k)|-d \ge r$. 
	Taking $H=G_k$ proves the lemma. Finally, $|V(H)|=2^{2k}$ is even.
\end{proof}

We also use the standard observation that if a noncomplete graph is
$M$-tough, then its minimum degree is at least $2M$.  Indeed, any vertex cut
that disconnects the graph creates at least two components, so vertex
connectivity is at least $2M$, and minimum degree is at least vertex
connectivity.

\section{Proof of Theorem~\ref{thm:main}}

Fix an even integer $q\ge 6$, and put
\begin{equation}\label{eq:parameters}
	s=\frac q2-1,
	\qquad
	t=\binom q2,
	\qquad
	N=s+t+q,
	\qquad
	\tau=\frac{s+2t}{t+q}.
\end{equation}
Note that $\tau<2$.

Choose $M$ so large that
\begin{equation}\label{eq:Mchoice}
	\frac M2\ge \tau N.
\end{equation}
By Lemma~\ref{lem:blocks}, choose a $d$-regular triangle-free graph $H$ with
$\tau(H)\ge M$ and $|V(H)|-d\ge q-1$.
Fix $v\in V(H)$ and set $A=N_H(v)$.
Since $H$ is triangle-free, $A$ is independent. Also,
$|A|=d\ge 2M$, since $H$ is noncomplete and $M$-tough.
Choose a set $W$ of $q-1$ distinct vertices in $V(H)\setminus A$.

Take $q$ disjoint copies $H_1,\ldots,H_q$ of $H$. Write $A_i$ for the copy
of $A$ in $H_i$, and write
\[
W_i=\{w_{ij}:j\in[q]\setminus\{i\}\}
\]
for the copy of $W$ in $H_i$.
Add two independent sets
\[
S,\quad |S|=s,
\qquad\text{and}\qquad
T=\{t_{ij}:1\le i<j\le q\},\quad |T|=t.
\]
The additional edges are as follows.
\begin{enumerate}[(1)]
	\item Join $S$ completely to $T$.
	\item Join every vertex of $S$ to every vertex of every $A_i$.
	\item For each $1\le i<j\le q$, add precisely the two edges
	\[
	t_{ij}w_{ij},\qquad t_{ij}w_{ji}.
	\]
\end{enumerate}
There are no other edges between the displayed parts. Denote the resulting
graph by $G_q$. See Figure~\ref{fig:construction} for an illustration.

\begin{figure}[t]
\begin{center}
\begin{tikzpicture}[
	x=1cm,y=1cm,
	font=\sffamily,
	hblock/.style={draw=#1, line width=1pt, rounded corners=2pt,
		fill=#1!4, minimum width=2.55cm, minimum height=3.15cm},
	aset/.style={draw=#1, line width=.8pt, rounded corners=6pt,
		fill=white, minimum width=1.95cm, minimum height=.95cm},
	port/.style={circle, draw=#1, fill=white, line width=.9pt,
		minimum size=4.6pt, inner sep=0pt},
	tvertex/.style={circle, draw=ink, fill=white, line width=.7pt,
		minimum size=7.2mm, inner sep=1pt},
	explicit/.style={line width=.8pt, opacity=.9},
	note/.style={font=\sffamily\footnotesize, text=ink!78, align=center}
	]
	
	\coordinate (X1) at (4.35,9.15);
	\coordinate (X2) at (7.65,9.15);
	\coordinate (X3) at (10.95,9.15);
	\coordinate (Xq) at (15.15,9.15);
	
	\begin{scope}[on background layer]
		\draw[draw=joinA, line width=8pt, line cap=round]
		(1.35,9.30) .. controls (1.95,11.20) and (2.55,11.22) .. (3.25,11.22);
		\draw[draw=joinA, line width=7pt, line cap=round] (3.25,11.22)--(15.95,11.22);
		\foreach \x in {4.35,7.65,10.95,15.15}
		\draw[draw=joinA, line width=7pt, line cap=round]
		(\x+.72,11.22)--(\x+.72,10.00);
		\draw[draw=joinT, line width=10pt, line cap=round]
		(1.30,7.85) .. controls (1.60,6.05) and (2.05,5.38) .. (3.10,5.02);
	\end{scope}
	
	\node[draw=ink, rounded corners=3pt, fill=soft, minimum width=1.55cm,
	minimum height=2.45cm, line width=.9pt] (Sbox) at (1.05,8.50) {};
	\node[font=\bfseries\large, anchor=south] at ($(Sbox.north)+(0,.10)$) {$S$};
	\foreach \y in {8.95,8.65,8.35}
	\node[circle, draw=ink, fill=white, minimum size=3.4pt, inner sep=0pt] at (1.05,\y) {};
	\node at (1.05,8.03) {$\vdots$};
	\node[font=\scriptsize, text=ink!65] at (1.05,7.58) {$|S|=\frac q2-1$};
	
	\foreach \name/\sub/\x/\col in {H1/1/4.35/cOne,H2/2/7.65/cTwo,
		H3/3/10.95/cThree,Hq/q/15.15/cQ}{
		\node[hblock=\col] (\name) at (\x,9.15) {};
		\node[font=\bfseries\large, text=\col, anchor=north]
		at ($(\name.north)+(0,-.08)$) {$H_{\sub}$};
		\node[aset=\col] (A\sub) at ($(\name.center)+(0,.38)$) {};
		\node[font=\bfseries, text=\col] at ($(A\sub.center)+(0,.17)$) {$A_{\sub}$};
		\foreach \u in {-.58,-.27,.58}
		\node[circle, draw=\col!72, fill=white, minimum size=2.5pt, inner sep=0pt]
		at ($(A\sub.center)+(\u,-.20)$) {};
		\node[font=\scriptsize, text=\col!72]
		at ($(A\sub.center)+(.16,-.20)$) {$\cdots$};
	}
	\node[font=\bfseries\Huge, text=ink!65] at (13.05,9.15) {$\cdots$};
	
	\draw[draw=cOne!68, line width=.8pt, densely dashed]
	(Sbox.north east) .. controls (2.05,10.88) and (2.60,11.22) .. (3.25,11.22)
	-- (15.95,11.22);
	\foreach \x in {4.35,7.65,10.95,15.15}
	\draw[draw=cOne!68, line width=.8pt, densely dashed,
	-{Stealth[length=2.0mm,width=1.25mm]}]
	(\x+.72,11.22)--(\x+.72,10.00);
	\node[note, fill=white, inner xsep=4pt, inner ysep=1.5pt]
	at (10.15,11.48) {$S$ is complete to $A_i$ for every $i\in[q]$};
	
	\node[port=cOne] (w12) at (3.62,8.05) {};
	\node[port=cOne] (w13) at (4.10,8.05) {};
	\node at (4.58,8.05) {$\cdots$};
	\node[port=cOne] (w1q) at (5.08,8.05) {};
	\node[font=\scriptsize,anchor=south] at ($(w12)+(0,.08)$) {$w_{12}$};
	\node[font=\scriptsize,anchor=south] at ($(w13)+(0,.08)$) {$w_{13}$};
	\node[font=\scriptsize,anchor=south] at ($(w1q)+(0,.08)$) {$w_{1q}$};
	
	\node[port=cTwo] (w21) at (6.92,8.05) {};
	\node[port=cTwo] (w23) at (7.40,8.05) {};
	\node at (7.88,8.05) {$\cdots$};
	\node[port=cTwo] (w2q) at (8.38,8.05) {};
	\node[font=\scriptsize,anchor=south] at ($(w21)+(0,.08)$) {$w_{21}$};
	\node[font=\scriptsize,anchor=south] at ($(w23)+(0,.08)$) {$w_{23}$};
	\node[font=\scriptsize,anchor=south] at ($(w2q)+(0,.08)$) {$w_{2q}$};
	
	\node[port=cThree] (w31) at (10.22,8.05) {};
	\node[port=cThree] (w32) at (10.70,8.05) {};
	\node at (11.18,8.05) {$\cdots$};
	\node[port=cThree] (w3q) at (11.68,8.05) {};
	\node[font=\scriptsize,anchor=south] at ($(w31)+(0,.08)$) {$w_{31}$};
	\node[font=\scriptsize,anchor=south] at ($(w32)+(0,.08)$) {$w_{32}$};
	\node[font=\scriptsize,anchor=south] at ($(w3q)+(0,.08)$) {$w_{3q}$};
	
	\node[port=cQ] (wq1) at (14.42,8.05) {};
	\node[port=cQ] (wq2) at (14.90,8.05) {};
	\node[port=cQ] (wq3) at (15.38,8.05) {};
	\node at (15.86,8.05) {$\cdots$};
	\node[font=\scriptsize,anchor=south] at ($(wq1)+(0,.08)$) {$w_{q1}$};
	\node[font=\scriptsize,anchor=south] at ($(wq2)+(0,.08)$) {$w_{q2}$};
	\node[font=\scriptsize,anchor=south] at ($(wq3)+(0,.08)$) {$w_{q3}$};
	
	\node[draw=ink, rounded corners=3pt, fill=soft!80, line width=.9pt,
	minimum width=13.95cm, minimum height=4.45cm] (Tbox) at (9.35,3.05) {};
	\node[font=\bfseries\large, anchor=west] at ($(Tbox.north west)+(.22,-.30)$) {$T$};
	\node[font=\scriptsize, text=ink!65, anchor=west]
	at ($(Tbox.north west)+(.22,-.67)$) {$|T|=\binom q2$};
	
	\node[tvertex] (t12) at (4.95,4.43) {$t_{12}$};
	\node[tvertex] (t13) at (6.35,3.70) {$t_{13}$};
	\node[tvertex] (t23) at (8.05,4.43) {$t_{23}$};
	\node[font=\bfseries\Large, text=ink!65] at (9.85,3.72) {$\cdots$};
	\node[tvertex] (t1q) at (11.40,4.43) {$t_{1q}$};
	\node[tvertex] (t2q) at (12.75,3.70) {$t_{2q}$};
	\node[tvertex] (t3q) at (14.10,4.43) {$t_{3q}$};
	\node[font=\bfseries\Large, text=ink!65] at (15.32,3.72) {$\cdots$};
	
	\draw[explicit,draw=cOne]   (t12) to[out=105,in=-90] (w12);
	\draw[explicit,draw=cTwo]   (t12) to[out=75,in=-90]  (w21);
	\draw[explicit,draw=cOne]   (t13) to[out=110,in=-90] (w13);
	\draw[explicit,draw=cThree] (t13) to[out=70,in=-90]  (w31);
	\draw[explicit,draw=cTwo]   (t23) to[out=105,in=-90] (w23);
	\draw[explicit,draw=cThree] (t23) to[out=75,in=-90]  (w32);
	\draw[explicit,draw=cOne]   (t1q) to[out=110,in=-90] (w1q);
	\draw[explicit,draw=cQ]     (t1q) to[out=70,in=-90]  (wq1);
	\draw[explicit,draw=cTwo]   (t2q) to[out=105,in=-90] (w2q);
	\draw[explicit,draw=cQ]     (t2q) to[out=75,in=-90]  (wq2);
	\draw[explicit,draw=cThree] (t3q) to[out=105,in=-90] (w3q);
	\draw[explicit,draw=cQ]     (t3q) to[out=75,in=-90]  (wq3);
	
	\foreach \n/\col in {w12/cOne,w13/cOne,w1q/cOne,w21/cTwo,w23/cTwo,w2q/cTwo,
		w31/cThree,w32/cThree,w3q/cThree,wq1/cQ,wq2/cQ,wq3/cQ}
	\node[port=\col] at (\n) {};
	\foreach \n/\lab in {t12/{t_{12}},t13/{t_{13}},t23/{t_{23}},
		t1q/{t_{1q}},t2q/{t_{2q}},t3q/{t_{3q}}}
	\node[tvertex] at (\n) {$\lab$};
	
	\node[note, fill=white, inner sep=2pt] at (2.10,5.76)
	{$S$ complete to $T$};
	\node[note] at (9.35,1.43)
	{For every $1\le i<j\le q$, add precisely the two edges\\[-1pt]
		$t_{ij}w_{ij}$ and $t_{ij}w_{ji}$.\\[2pt]
		};
	
\end{tikzpicture}
\caption{Schematic of the construction.   Each $t_{ij}$ has exactly one edge
to $H_i$ and one edge to $H_j$.  The port sets lie outside the independent
neighborhoods $A_i$, which prevents triangles involving $S$.}
\label{fig:construction}
\end{center}
\end{figure}

\begin{CLA}\label{prop:trianglefree}
The graph $G_q$ is triangle-free.
\end{CLA}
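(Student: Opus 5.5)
We argue by contradiction: suppose $xyz$ is a triangle in $G_q$, and classify it by how its vertices are distributed among the parts $S$, $T$, and $V(H_1),\ldots,V(H_q)$. First we record the adjacency structure the construction gives. $S$ and $T$ are independent. Every $H_i$ is a copy of the triangle-free graph $H$ and is induced in $G_q$. There are no edges between $H_i$ and $H_j$ for $i\ne j$. The neighbours of a vertex of $S$ are exactly the vertices of $T\cup\bigcup_i A_i$. A vertex $t_{ij}\in T$ has neighbours $S\cup\{w_{ij},w_{ji}\}$.

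We first handle triangles meeting $T$. Let $t_{ij}$ be in the triangle. Its other two vertices are adjacent to each other and lie in $S\cup\{w_{ij},w_{ji}\}$. Two vertices of $S$ are not adjacent, since $S$ is independent. The vertices $w_{ij}\in V(H_i)$ and $w_{ji}\in V(H_j)$ lie in different blocks, so they are not adjacent. The remaining possibility is a pair $\{x,w\}$ with $x\in S$ and $w\in\{w_{ij},w_{ji}\}$. However, $W_i$ and $W_j$ are copies of $W\subseteq V(H)\setminus A$, so $w\notin A_i\cup A_j$, and $x$ is not adjacent to $w$. Hence no triangle meets $T$.

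Next, suppose the triangle avoids $T$. If it contains two vertices of $S$, these would be adjacent, contradicting the independence of $S$. If it contains exactly one vertex $x\in S$, its other two vertices lie in $\bigcup_i A_i$ and are adjacent to each other. Since there are no edges between different blocks, both lie in the same $A_i$. But $A_i$ is a copy of $A=N_H(v)$, which is independent because $H$ is triangle-free, so this is impossible. Finally, if the triangle avoids both $S$ and $T$, all three vertices lie in a single $H_i$, again because there are no edges between different blocks, and $H_i$ is triangle-free.

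This covers all cases. None of them is difficult. The point that needs care is the one the construction was designed for: the ports $W_i$ lie outside $A_i$, and the neighbourhood $A$ is independent.
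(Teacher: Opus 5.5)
Your proof is correct and takes essentially the same approach as the paper. Both arguments rest on the independence of $S$, $T$ and $A_i$, on the fact that $t_{ij}$ has only one neighbour in each of $H_i$ and $H_j$, and on the choice of the ports outside $A_i$; your case split is a more exhaustive version of the paper's, and it states explicitly that there are no edges between distinct blocks, which the paper uses tacitly.
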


\begin{proof}
Each $H_i$ is triangle-free, and both $S$ and $T$ are independent.  A triangle
containing two vertices of one block and a vertex of $S$ would use an edge
inside $A_i$, but $A_i$ is independent.  A triangle containing a vertex
$t_{ij}$ and two block vertices is impossible because $t_{ij}$ has only one
neighbor in each of $H_i$ and $H_j$.  Finally, a triangle using an edge between
$S$ and $T$ would require a port adjacent to $S$; the ports were chosen outside
$A_i$, and $A_i$ is exactly the set of block vertices adjacent to $S$.
\end{proof}

\begin{CLA}\label{prop:no-2-factor}
	The graph $G_q$ has no $2$-factor.
\end{CLA}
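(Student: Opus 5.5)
The plan is to apply Theorem~\ref{tutte's theorem} directly, using as barrier the pair formed by the two added independent sets $S$ and $T$ of the construction (we reuse the names; here they play the roles of $S$ and $T$ in the definition of $\delta(S,T)$). Since the construction was clearly designed around this pair, I expect the computation to give exactly $\delta(S,T)=-2$, and no case analysis should be needed.

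\emph{Components.} The first step is to identify the components of $G_q-(S\cup T)$. By construction, the only edges leaving the blocks go to $S$ or to $T$, and there are no edges between distinct blocks. So the components are the connected components of the $H_i$. Each $H_i$ is connected, because $H$ has $\tau(H)\ge M>0$ and is noncomplete; a disconnected noncomplete graph has toughness $0$. Hence $G_q-(S\cup T)$ has exactly the $q$ components $H_1,\ldots,H_q$.

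\emph{Parity.} Next I will count $e_{G_q}(H_i,T)$. A vertex $t_{jk}$ sends an edge into $H_i$ exactly when $i\in\{j,k\}$, and then it sends exactly one edge, namely to $w_{ij}$ or $w_{ik}$. Thus $e_{G_q}(H_i,T)=q-1$, which is odd because $q$ is even. So every $H_i$ is an odd component, and $c(S,T)=q$.

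\emph{Degrees and conclusion.} Each $t_{ij}$ has neighbours only in $S$ together with $w_{ij}$ and $w_{ji}$, since $T$ is independent. Hence $d_{G_q-S}(t_{ij})=2$ for every vertex of $T$. Substituting into the definition of $\delta$ gives
\[
\delta(S,T)=2s+2t-2t-q=2\Bigl(\frac q2-1\Bigr)-q=-2<0 .
\]
By Theorem~\ref{tutte's theorem}, $G_q$ has no $2$-factor.

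The only point requiring any care is the connectivity of each $H_i$, which is needed so that each block counts as a single odd component; the positive toughness of $H$ settles it.
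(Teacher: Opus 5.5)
Your proposal is correct and follows the paper's proof: you exhibit $(S,T)$ as a barrier with $\sum_{y\in T}d_{G_q-S}(y)=2t$ and $c(S,T)=q$, which gives $\delta(S,T)=2s-q=-2$, and then invoke Tutte's criterion. Your argument that each $H_i$ is connected (since $\tau(H)\ge M>0$) supplies a detail the paper leaves implicit when it asserts that the components of $G_q-(S\cup T)$ are exactly $H_1,\ldots,H_q$.
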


\begin{proof}
	We show that the ordered pair $(S,T)$ from the construction is a
	barrier. Every vertex $t_{ij}\in T$ has exactly two neighbors in
	$G_q-S$, namely $w_{ij}$ and $w_{ji}$. Hence
	\[
	\sum_{y\in T}d_{G_q-S}(y)=2|T|=2t.
	\]
	
	The components of $G_q-(S\cup T)$ are precisely
	$H_1,\ldots,H_q$. Each $H_i$ sends exactly $q-1$ edges to $T$,
	one through each vertex of $W_i$. Since $q$ is even, $q-1$ is odd,
	so all $q$ components are counted by $c(S,T)$. Thus $c(S,T)=q$.
	Therefore
	\[
	\delta(S,T)
	=2|S|+\sum_{y\in T}d_{G_q-S}(y)-2|T|-c(S,T)
	=2s+2t-2t-q=-2.
	\]
	Hence $(S,T)$ is a barrier, and Theorem~\ref{tutte's theorem}
	implies that $G_q$ has no $2$-factor.
\end{proof}

\begin{CLA}\label{prop:upper}
	$\tau(G_q)\le \tau$.
\end{CLA}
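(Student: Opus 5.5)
To prove $\tau(G_q)\le \tau$, I will exhibit one separating set $X$ with $c(G_q-X)\ge 2$ and $|X|/c(G_q-X)=\tau$. Since $\tau=\frac{s+2t}{t+q}$, the target is a set of size $s+2t$ whose removal leaves exactly $t+q$ components.

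The natural choice is
\[
X = S\cup\bigcup_{1\le i<j\le q}\{w_{ij},w_{ji}\},
\]
that is, $S$ together with all the port vertices $W_1\cup\cdots\cup W_q$. Its size is
\[
|X|=s+q(q-1)=s+2t,
\]
because $|W_i|=q-1$ and $q(q-1)=2\binom q2$.

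Next I count the components of $G_q-X$.

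1. Each vertex $t_{ij}$ has neighbourhood $S\cup\{w_{ij},w_{ji}\}$, all of which lies in $X$. So every $t_{ij}$ becomes an isolated vertex, giving $t$ components.
2. For each $i$, the remaining part of block $i$ is $H_i-W_i$. Its only edges to the outside went to $S$ (through $A_i$) and to $T$ (through $W_i$). After deleting $X$, it is therefore joined to nothing outside $H_i$, so these pieces lie in distinct components for distinct $i$.
3. It remains to check that each $H_i-W_i$ is nonempty and connected. It is nonempty since $|V(H)|\ge d+q-1>q-1$. It is connected because $H$ is $M$-tough with $M$ huge. Then the connectivity of $H$ is at least $2M>q-1=|W|$, so deleting $q-1$ vertices cannot disconnect it.

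This gives exactly $c(G_q-X)=t+q\ge 2$ components, and hence
\[
\tau(G_q)\le \frac{|X|}{c(G_q-X)}=\frac{s+2t}{t+q}=\tau .
\]

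The only step needing care is the connectivity of $H_i-W_i$ in item 3. The inequality needed there is $2M>q-1$, which follows from \eqref{eq:Mchoice}: $M\ge 2\tau N$ and $N\ge q$. The connectivity bound itself is the standard remark that an $M$-tough noncomplete graph has vertex connectivity at least $2M$, stated before Section 3. In fact only a lower bound on the number of components is needed for the upper bound on $\tau(G_q)$, so even without connectivity the count would be at least $t+q$ and the ratio at most $\tau$.
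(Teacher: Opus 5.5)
Your proof is correct and is essentially the paper's argument: the same cut $S\cup\bigcup_{i=1}^q W_i$ of size $s+2t$, leaving the $t$ isolated vertices of $T$ and the $q$ pieces $H_i-W_i$, with connectivity of each piece coming from the $M$-toughness of $H$ (you phrase it via vertex connectivity at least $2M$, the paper via $|W_i|\ge M\,c(H_i-W_i)$). Your closing remark is also right: only $c(G_q-X)\ge t+q$ is needed for the upper bound, so nonemptiness of each $H_i-W_i$ already suffices and the connectivity check is not actually necessary.
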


\begin{proof}
	Delete $W_0:=S\cup(\bigcup_{i=1}^q W_i)$ in $G_q$. This set has size
	$|W_0|=s+q(q-1)=s+2t$. Each vertex of $T$ becomes isolated.
	
	For every $i$, the graph $H_i-W_i$ is connected. Otherwise, the
	$M$-toughness of $H_i$ would give
	\[
	|W_i|\ge M\comp(H_i-W_i)\ge 2M,
	\]
	contrary to $|W_i|=q-1<2M$. Consequently,
	$\comp(G_q-W_0)=t+q$, and hence
	\[
	\tau(G_q)\le \frac{s+2t}{t+q}=\tau.
	\]
\end{proof}

\begin{CLA}\label{prop:lower}
	$\tau(G_q)\ge\tau$.
\end{CLA}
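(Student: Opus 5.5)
Fix a cutset $X\subseteq V(G_q)$ with $c(G_q-X)\ge 2$; we must show $|X|\ge \tau\, c(G_q-X)$. The first dichotomy is whether some block is \emph{broken}, meaning $X$ separates $H_i-X$ into at least two components or contains a large part of $V(H_i)$.

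\textbf{Case 1: some $H_i$ is heavily hit.} If for some $i$ we have $c(H_i-X)\ge 2$, then $M$-toughness of $H$ gives $|X\cap V(H_i)|\ge M\,c(H_i-(X\cap V(H_i)))\ge 2M$. Likewise, if $X$ contains all of $V(H_i)$, then $|X|\ge |V(H)|\ge d\ge 2M$. In either situation we bound the number of components crudely. Each component of $G_q-X$ either contains a vertex of $S\cup T$ (at most $s+t$ such components) or lies inside a single $H_j$ (after deleting $X$, a component avoiding $S\cup T$ must stay inside one block). For each $j$, the components inside $H_j$ number at most $\max(1, |X\cap V(H_j)|/M)$ by toughness of $H$. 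Summing gives
\[
c(G_q-X)\le s+t+q+\sum_j |X\cap V(H_j)|/M\le N+|X|/M .
\]
Since $|X|\ge 2M\ge 2\tau N$ by \eqref{eq:Mchoice}, we get $\tau c\le \tau N+\tau|X|/M\le |X|/2+|X|/2$ (using $\tau/M\le 1/2$). This settles the case.

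\textbf{Case 2: every $H_i-X$ is connected (possibly empty only if it is cheap, which is excluded as above).} Each block then contributes a single "core" $R_i=H_i-X$. If $S\not\subseteq X$, then some $s_0\in S\setminus X$ is adjacent to all of $T\setminus X$ and to every $A_i\setminus X$. Because $|A_i|\ge 2M>|X|$ (otherwise we are in Case 1 with $|X|$ large, handled by the same counting), every core is joined to $s_0$, so everything is one component, a contradiction. Hence $S\subseteq X$. Now the components are the cores $R_i$, merged through surviving vertices $t_{ij}$ whose ports $w_{ij},w_{ji}$ are both present, together with isolated or pendant $t$-vertices. An isolated $t_{ij}$ requires $X$ to contain both of its ports, or the vertex itself is in $X$ and does not count.

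This reduces the case to a counting problem. Let $x=|X\setminus S|$, and let the cores be partitioned into $k$ groups (components) by the surviving $T$-edges. A vertex $t_{ij}$ becomes its own component only if both $w_{ij}$ and $w_{ji}$ lie in $X$, so the number of isolated $T$-vertices is at most the number of pairs both of whose ports are deleted. The main step is to show
\[
s+x\ge \tau\,(k+m),
\]
where $m$ is the number of isolated $T$-vertices. The intended argument is as follows. To separate the $q$ cores into $k$ groups, every pair $i,j$ in different groups needs $t_{ij}$ or one of its ports in $X$, which costs at least one vertex per cross pair. Creating an isolated $t_{ij}$ costs two port deletions. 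Optimizing, the cheapest way to gain components is to delete ports, and a port $w_{ij}$ serves only the pair $\{i,j\}$. So $x\ge 2m+(\#\text{cross pairs not isolated})$, where the number of cross pairs is at least $(k-1)(q-k/2)$-ish, minimized with singleton groups. One then checks that the resulting ratio $(s+x)/(k+m)$ is minimized at $k=q$, $m=t$, i.e. $x=2t$, which gives exactly $\tau$. Note that $c\ge 2$ and partial choices give ratios near $2$ or larger, because $\tau<2$ and each isolated $T$-vertex costs $2$.

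I expect the main obstacle to be this final optimization. Specifically, one must verify that configurations with few groups ($k$ small, $m$ moderate) do not beat $\tau$, using the fact that $s=q/2-1$ is "spent" regardless. For example, with $k=1$ we have $m\le$ the number of pairs with both ports deleted and $x\ge 2m$, so the ratio is $\ge (s+2m)/(1+m)$. For $m\ge 1$ this ratio is at least its value at $m=t$, roughly $2-\ldots$, and one must check that it is $\ge\tau$, which holds since $s\ge 1$ and the function decreases toward $2$ from above when $s\ge 2$ (for $s<2$ a direct check is needed). For general $k$, the plan is to write $x\ge 2m+\big(\binom{q}{2}-\sum_g\binom{q_g}{2}\big)-m$ and minimize over the group sizes $q_g$ by convexity.
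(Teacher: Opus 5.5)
Your reduction is the same as the paper's. The ``heavily hit'' case is handled by toughness of the blocks; your global count $c(G_q-X)\le N+|X|/M$, combined with $|X|\ge 2M\ge 2\tau N$, is a valid and slightly slicker substitute for the paper's per-block bound $|U_i|\ge\frac M2(h_i+1)$. In the remaining case you correctly obtain $S\subseteq X$, pass to the subdivided $K_q$, bound the number of cross pairs below by $C_k=(k-1)(2q-k)/2$, and observe that each isolated $t_{ij}$ costs two ports.

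However, the argument stops exactly where the real content of the claim lies. The inequality $s+x\ge\tau(k+m)$ is only asserted (``one then checks that the ratio is minimized at $k=q$, $m=t$''), and you verify it only for $k=1$. Your remark that partial configurations give ratios ``near $2$ or larger'' proves nothing, because $2-\tau=(3q+2)/(q^2+q)$ is itself small. Moreover, there are configurations whose ratio sits within $O(q^{-2})$ of $\tau$. For example, delete $S$ and every port except $w_{12},w_{21}$. This gives $k=q-1$, $m=t-1$, $x=2t-2$, and ratio $(s+2t-2)/(q+t-2)$, which exceeds $\tau$ by only $(q^2-2q-2)/\bigl((q+t)(q+t-2)\bigr)$. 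So no crude estimate will close this step.

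To finish, first combine your two bounds into $x\ge m+\max\{C_k,m\}$. The displayed general-$k$ bound $x\ge m+C$ alone is insufficient: for $k=1$ and large $m$ it gives only $(s+m)/(1+m)$, which is below $\tau$. For fixed $k$, the map $m\mapsto (s+m+\max\{C_k,m\})/(k+m)$ is linear-fractional on $[0,C_k]$ and on $[C_k,t]$, hence monotone on each piece. So only $m\in\{0,C_k,t\}$ need checking:
\begin{enumerate}[(i)]
\item For $m=0$, use $C_k-(k-1)s=(k-1)(q+2-k)/2\ge 0$, which gives ratio at least $s\ge 2>\tau$.
\item For $m=t$, use $k+t\le q+t$.
\item The breakpoint $m=C_k$ is the nontrivial one. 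It needs the identity
\[
(t+q)(s+2C_k)-(s+2t)(k+C_k)=\tfrac14(q-k)\bigl((3q+2)k+q^2-7q-6\bigr),
\]
whose right side is nonnegative for $1\le k\le q$ when $q\ge 6$.
\end{enumerate}
This is precisely the computation the paper carries out. Without it, or an equivalent argument, your proposal is a correct plan rather than a proof. Since $q\ge 6$ forces $s\ge 2$, your aside about the case $s<2$ is moot.
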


\begin{proof}
	Let $X\subseteq V(G_q)$ satisfy $\comp(G_q-X)\ge2$. Put
	\[
	U_i=X\cap V(H_i),
	\qquad
	h_i=\comp(H_i-U_i),
	\]
	where $h_i=0$ if $H_i-U_i$ is empty. Call $H_i$ \emph{bad} if
	\begin{enumerate}[(i)]
		\item $H_i-U_i$ is disconnected or empty; or
		\item $H_i-U_i$ is connected but $A_i\subseteq U_i$.
	\end{enumerate}
	Otherwise, call $H_i$ \emph{good}.
	
	For every bad block,
	\begin{equation}\label{eq:badcost}
		|U_i|\ge\frac M2(h_i+1).
	\end{equation}
	Indeed, if $h_i\ge2$, the $M$-toughness of $H_i$ gives
	$|U_i|\ge Mh_i$. If $h_i=0$, then $U_i=V(H_i)$ and
	$|U_i|\ge2M+1$. Finally, if $h_i=1$ and $H_i$ is bad, then
	$A_i\subseteq U_i$, so $|U_i|\ge|A_i|\ge2M$.
	
	Suppose first that there are $b\ge1$ bad blocks, and put
	$h=\sum_{H_i\text{ bad}}h_i$. Counting every surviving vertex of
	$S\cup T$ separately and every good block as one piece gives
	\[
	\comp(G_q-X)\le h+s+t+q=h+N.
	\]
	By \eqref{eq:badcost} and \eqref{eq:Mchoice},
	\[
	|X|
	\ge\frac M2(h+b)
	\ge\tau N(h+b)
	\ge\tau(h+N)
	\ge\tau\comp(G_q-X).
	\]
	Here $N(h+b)\ge h+N$ because $N\ge1$ and $b\ge1$.
	
	It remains to consider the case in which all blocks are good. Then
	each $H_i-U_i$ is nonempty and connected and contains a vertex of
	$A_i$. If a vertex of $S$ survived, it would join all $q$ surviving
	block pieces and all surviving vertices of $T$ into one component,
	contrary to $\comp(G_q-X)\ge2$. Therefore
	\begin{equation}\label{eq:Sdeleted}
		S\subseteq X.
	\end{equation}
	
	Let $R$ be the graph obtained from $G_q-X$ by contracting each
	connected subgraph $H_i-U_i$ to a single vertex, denoted by $i$.
	Since every block is good, each $H_i-U_i$ is nonempty; hence all
	$q$ branch vertices $1,\ldots,q$ occur in $R$. In particular, no
	branch vertex is deleted in this case.
	
	After deleting $S$, the only possible neighbors of $t_{ij}$ outside
	$T$ are $w_{ij}\in W_i$ and $w_{ji}\in W_j$. Thus, before the
	deletions belonging to $X$ are taken into account, the contracted
	structure is the subdivision $J_q$ of $K_q$, in which $t_{ij}$
	subdivides the edge $ij$. Deleting $t_{ij}$ removes the corresponding
	subdivision vertex, while deleting $w_{ij}$ removes the edge
	$it_{ij}$. A vertex of $U_i\setminus W_i$ contributes to $|X|$ but,
	since $H_i-U_i$ remains connected, removes no additional edge from
	$R$. Such deletions may therefore be disregarded when deriving a
	lower bound for $|X|$. Consequently, $R$ is obtained from $J_q$ by
	deleting some subdivision vertices and some branch--subdivision
	edges, but no branch vertices.
	
	Set
	\[
	x=\left|X\cap\left(T\cup\bigcup_{i=1}^qW_i\right)\right|.
	\]
	Thus $x$ counts the deleted subdivision vertices $t_{ij}$ and the
	deleted ports $w_{ij}$. Let $a$ be the number of components of $R$
	containing a branch vertex, and let $y$ be the number of isolated
	surviving subdivision vertices in $R$.
	
	Let $B_1,\ldots,B_a$ be the sets of branch vertices in these $a$
	components, and put $n_h=|B_h|$. Since all $q$ branch vertices
	survive, $\sum_{h=1}^a n_h=q$. The number of pairs of branch vertices
	belonging to different sets $B_h$ is
	\[
	C=\sum_{h<r}n_hn_r
	=\frac12\left(q^2-\sum_{h=1}^a n_h^2\right).
	\]
	For fixed $q$ and $a$, this quantity is minimized when
	$\sum_{h=1}^a n_h^2$ is maximized, which occurs for
	\[
	(n_1,\ldots,n_a)=(q-a+1,1,\ldots,1).
	\]
	For this partition,
	\begin{align*}
		C
		&=(q-a+1)(a-1)+\binom{a-1}{2}\\
		&=(a-1)\left(q-a+1+\frac{a-2}{2}\right)\\
		&=\frac{(a-1)(2q-a)}2.
	\end{align*}
	Consequently,
	\[
	C\ge C_a:=\frac{(a-1)(2q-a)}2.
	\]
	
	For every pair $i,j$ lying in different branch sets $B_h$ and
	$B_r$, at least one of $t_{ij},w_{ij},w_{ji}$ belongs to $X$;
	otherwise, the path $it_{ij}j$ would join the two corresponding
	components of $R$. Hence every such pair accounts for at least one
	of the $x$ deletions.
	
	Now let $t_{ij}$ be an isolated surviving subdivision vertex of $R$.
	Then both $w_{ij}$ and $w_{ji}$ belong to $X$. If $i$ and $j$ lie
	in different branch sets $B_h$ and $B_r$, one of these two deletions
	may already have been counted for the crossing pair $\{i,j\}$, but
	the other is an additional deletion. If $i$ and $j$ lie in the same
	branch set, the pair $\{i,j\}$ is not counted by $C$, so both
	deletions are additional. Since distinct subdivision vertices have
	distinct pairs of incident ports, it follows that
	\[
	x\ge C+y\ge C_a+y.
	\]
	Moreover, every isolated surviving subdivision vertex requires the
	deletion of both of its incident ports, and hence $x\ge2y$. Combining
	these bounds gives
	\begin{equation}\label{eq:reduced-cost}
		x\ge y+\max\{C_a,y\}.
	\end{equation}
	
	We next prove that, for $1\le a\le q$ and $0\le y\le t$,
	\begin{equation}\label{eq:reduced-ratio}
		\frac{s+y+\max\{C_a,y\}}{a+y}\ge\tau.
	\end{equation}
	For fixed $a$, extend the left-hand side to a function of a real
	variable $y$. It has the piecewise form
	\[
	F_a(y)=
	\begin{cases}
		\dfrac{s+C_a+y}{a+y},&0\le y\le C_a,\\[6pt]
		\dfrac{s+2y}{a+y},&C_a\le y\le t.
	\end{cases}
	\]
	On the interiors of these intervals,
	\[
	F_a'(y)=
	\begin{cases}
		\dfrac{a-s-C_a}{(a+y)^2},&0<y<C_a,\\[6pt]
		\dfrac{2a-s}{(a+y)^2},&C_a<y<t.
	\end{cases}
	\]
	The derivative has constant sign on each interval, so $F_a$ is
	monotone on each interval. It is therefore enough to check
	$y \in \{0,C_a,t\}$:
	\begin{equation}\label{eq:three-checks}
		\frac{s+C_a}{a}\ge\tau,\qquad
		\frac{s+2C_a}{a+C_a}\ge\tau,\qquad
		\frac{s+2t}{a+t}\ge\tau.
	\end{equation}
	
	For the first inequality, since $s=(q-2)/2$,
	\[
	C_a-(a-1)s
	=\frac{(a-1)(2q-a)}2-\frac{(a-1)(q-2)}2
	=\frac{(a-1)(q+2-a)}2\ge0.
	\]
	Therefore
	\[
	s+C_a
	=as+\bigl(C_a-(a-1)s\bigr)
	\ge as,
	\]
	and hence $(s+C_a)/a\ge s$. Since $q\ge6$, we have $s\ge2$,
	whereas $\tau<2$, so $(s+C_a)/a\ge s\ge2>\tau$.
	
	For the third inequality, $a\le q$ implies $a+t\le q+t$. Thus
	\[
	\frac{s+2t}{a+t}
	\ge\frac{s+2t}{q+t}
	=\tau.
	\]
	
	It remains to prove the middle inequality in
	\eqref{eq:three-checks}. Since all denominators are positive, it is
	equivalent to
	\[
	(t+q)(s+2C_a)\ge(s+2t)(a+C_a).
	\]
	Using
	\[
	s=\frac{q-2}{2},\qquad
	t=\frac{q(q-1)}2,\qquad
	C_a=\frac{(a-1)(2q-a)}2,
	\]
	the difference between the two sides is
	\begin{align*}
		&(t+q)(s+2C_a)-(s+2t)(a+C_a)\\
		&=\frac{q(q+1)}2
		\left(\frac{q-2}{2}+(a-1)(2q-a)\right)
		-\left(\frac{q-2}{2}+q(q-1)\right)
		\left(a+\frac{(a-1)(2q-a)}2\right)\\
		&=\frac14\Bigl(
		q(q+1)\bigl(q-2+2(a-1)(2q-a)\bigr)
		-(2q^2-q-2)\bigl(2a+(a-1)(2q-a)\bigr)
		\Bigr)\\
		&=\frac14\Bigl(
		q(q+1)\bigl(-2a^2+(4q+2)a-3q-2\bigr)
		-(2q^2-q-2)\bigl(-a^2+(2q+3)a-2q\bigr)
		\Bigr)\\
		&=\frac14\Bigl(
		-(3q+2)a^2+(2q^2+9q+6)a+q^3-7q^2-6q
		\Bigr)\\
		&=\frac14\Bigl(
		(q-a)(3q+2)a+(q-a)(q^2-7q-6)
		\Bigr)\\
		&=\frac{(q-a)\bigl((3q+2)a+q^2-7q-6\bigr)}4.
	\end{align*}
	The first factor is nonnegative because $a\le q$. For the second,
	$a\ge1$ gives
	\[
	(3q+2)a+q^2-7q-6
	\ge q^2-4q-4
	\ge0
	\]
	for $q\ge6$. This proves the middle inequality and hence
	\eqref{eq:reduced-ratio}.
	
	Contraction of connected subgraphs does not change the number of
	components. Moreover, subdivision vertices are pairwise nonadjacent,
	so every component of $R$ either contains a branch vertex or consists
	of a single isolated subdivision vertex. Therefore
	\[
	\comp(G_q-X)=\comp(R)=a+y.
	\]
	Finally, since $S\subseteq X$, by \eqref{eq:reduced-cost} and
	\eqref{eq:reduced-ratio},
	\[
	|X|
	\ge s+x
	\ge s+y+\max\{C_a,y\}
	\ge\tau(a+y)
	=\tau\comp(G_q-X).
	\]
	Thus every vertex cut $X$ satisfies
	$|X|\ge\tau\comp(G_q-X)$, proving that $\tau(G_q)\ge\tau$.
\end{proof}
	
Combining Claims~\ref{prop:upper} and~\ref{prop:lower} proves the equality
$\tau(G_q)=\tau$.  Together with Claim~\ref{prop:trianglefree} and~\ref{prop:no-2-factor}, this proves Theorem~\ref{thm:main}.
 
\section*{Declaration of Use of AI Tools}

During the preparation of this manuscript, the author  used ChatGPT 5.6 Plus 
to assist with language
editing, grammar, clarity, and formatting. The author  reviewed and verified
all AI-assisted edits and take full responsibility for the content of the
manuscript.

\end{document}